\renewcommand\th@plain{\slshape}
\titlespacing{\paragraph}{0em}{0em}{0.5em}
\titlespacing{\subparagraph}{0em}{0em}{0.5em}
\newcommand{\bC}{\mathbb C}
\newcommand{\bP}{\mathbb P}
\newcommand{\bQ}{\mathbb Q}
\newcommand{\bZ}{\mathbb Z}
\newcommand{\cC}{\mathcal C}
\newcommand{\cJ}{\mathcal J}
\newcommand{\cL}{\mathcal L}
\newcommand{\cP}{\mathcal P}
\newcommand{\cU}{\mathcal U}
\newcommand{\cX}{\mathcal X}
\newcommand{\fB}{\mathfrak B}
\newcommand{\fC}{\mathfrak C}
\newcommand{\fM}{\mathfrak M}
\newcommand{\fT}{\mathfrak T}
\newcommand{\fX}{\mathfrak X}
\newcommand{\lra}{\longrightarrow}
\newcommand{\Pic}{\mathrm{Pic}}
\newcommand{\Hol}{\mathrm{Hol}}
\newcommand{\Map}{\mathrm{Map}}
\newcommand{\Alg}{\mathrm{Alg}}
\newcommand{\an}{\mathrm{an}}
\newcommand{\Diff}{\mathrm{Diff}}
\DeclareMathOperator{\Spec}{Spec}
\DeclareMathOperator{\Proj}{Proj}
\DeclareMathOperator{\Sym}{Sym}
\newcommand{\catSch}{\mathsf{Sch}}
\newcommand{\catSchlft}{\mathsf{Sch}^\mathrm{lft}}
\newcommand{\catLCH}{\mathsf{LCH}}
\newcommand{\catGpd}{\mathsf{Groupoids}}
\newcommand{\opp}{\mathrm{op}}
\newcommand{\topp}{\mathrm{top}}
\newcommand{\stacks}[1]{\cite[\href{https://stacks.math.columbia.edu/tag/#1}{Tag #1}]{stacks-project}}
\NewDocumentCommand{\set}{somm}{%
   \IfNoValueTF{#2}
    {\IfBooleanTF{#1}{\{#3 \mid #4\}}{\mleft\{ #3 \mathrel{}\middle\vert\mathrel{} #4 \mright\}}}
    {\mathopen{#2\{}#3 \mathrel{}#2\vert\mathrel{} #4\mathclose{#2\}}}%
  }
\newtheorem{theorem}{Theorem}[section]
\newtheorem*{theorem*}{Theorem}
\newtheorem{theorem-in-progress}[theorem]{Theorem-in-progress}
\newtheorem*{conjecture*}{Conjecture}
\newtheorem{definition}[theorem]{Definition}
\newtheorem{lemma}[theorem]{Lemma}
\newtheorem{remark}[theorem]{Remark}
\newtheorem{example}[theorem]{Example}
\newtheorem{corollary}[theorem]{Corollary}
\newtheorem*{maintheorem*}{Main Theorem}
\title{Families of algebraic and continuous maps to $\bP^m$}
\author{Alexis Aumonier}
\address{Department of Mathematics, Stockholms universitet, 106 91 Stockholm, Sweden}
\email{alexis.aumonier@math.su.se}
\begin{document}

\begin{abstract}
We explain how results comparing the homology of spaces of algebraic and continuous maps to projective spaces can be leveraged to compare moduli stacks of families of algebraic and continuous maps.
\end{abstract}

\maketitle

\section{Introduction}

Let $\pi \colon \fX \to \fB$ be a smooth projective morphism between complex Deligne--Mumford stacks, which we think of as the universal family of a moduli of complex varieties. Our motivating examples are $\fC_g \to \fM_g$, the universal family of smooth genus $g \geq 2$ curves, and $\cX_d \to \cU_d$ the universal family of smooth projective degree $d$ hypersurfaces. In this note, we are interested in an associated moduli stack of families of maps to a fixed projective space whose functor of points is informally given by
\[
    T \longmapsto \left\{(p \colon T \to \fB, \ f \colon p^*\fX \to \bP^m)\right\}.
\]
Our motivation stems from extending the results of the case of a single variety \cite{Segal1979,Mostovy2010,Aumonier2024} to a parameterised setting. The pioneering work Segal \cite{Segal1979} showed that, for a smooth curve $C$, the inclusion
\[
    \Alg^d(C, \bP^m) \hookrightarrow \Map^d(C, \bP^m)
\]
of the space of algebraic maps of degree $d$ inside the space of continuous maps, induces an isomorphism in homology in a range of degree growing with $d$. An analogous result holds when $C$ is replaced by a projective space $\bP^n$ by work of Mostovoy \cite{Mostovy2010}, and the general case for any smooth projective variety $X$ is treated in \cite{Aumonier2024}. One may ask if similar results hold when $C$, or more generally when $X$, varies in a moduli. As we understand it, this question was considered for curves varying in $\fM_g$ by Ayala in \cite{Ayala2008} but unfortunately that preprint contained a mistake, as explained in \cite[Remark~2.0.1]{Ayala2008}. We use our more recent understanding of spaces of algebraic maps to give a proof of the main theorem of \cite{Ayala2008} (whose statement remains valid), as a special case of a general theorem for families of maps from smooth projective varieties.

For higher dimensional varieties (not curves), the degree of a map to projective space is a class in the second cohomology group of the source. For families of maps, the natural notion is a continuous choice of such classes living in the sheaf cohomology group $H^0(\fB^\topp; R^2\pi_*\underline{\bZ})$ (we have written $(-)^\topp$ for the underlying topological stack, see \cref{def:topologification}). Given a class $\alpha$ in this cohomology group, we construct an algebraic (resp. topological) stack $\Alg_\fB^\alpha(\fX, \bP^m)$ (resp. $\Map_\fB^\alpha(\fX, \bP^m)$) parameterising families of algebraic (resp. continuous) maps to $\bP^m$ of fibrewise degree $\alpha$. We defer the precise definitions to \cref{sec:definitions}. As any algebraic map is in particular continuous, there is a natural morphism from (the underlying topological stack of) the first stack to the second. Our theorem concerns its effect on singular integral homology:
\begin{maintheorem*}[See \cref{theorem:precise-main} for a precise version]
Let $n$ be the dimension of the fibres of $\fX \to \fB$, and $\alpha \in H^0(\fB^\topp; R^2\pi_*\underline{\bZ})$. Then the natural map
\[
    \Alg_\fB^\alpha(\fX, \bP^m)^\topp \lra \Map_\fB^\alpha(\fX, \bP^m)
\]
induces an isomorphism in homology in the range of degrees $* < (2m - 2n +1) \cdot d(\fX,\alpha) + 2m - 2n - 3$, where $d(\fX, \alpha)$ is the integer defined in \cref{def:uniform-d-alpha}.
\end{maintheorem*}

The number $d(\fX, \alpha)$ is hard to compute in general, though it can be made arbitrarily big up to changing $\alpha$, see e.g. \cite[Section~3]{Aumonier2024} for some properties. In good cases however it is easy to understand: for the moduli of maps of degree $d$ from families of curves of genus $g \geq 2$, it is $2g-d$ (see \cite[Lemma~3.4]{Aumonier2024}). We have unravelled the consequences of our main theorem in this case in \cref{sec:teichmuller}, using Teichmüller theory in a spirit closer to the preprint \cite{Ayala2008}.

\subsection{Consequences in rational homology}

The main benefit of our theorem (or any theorem of the flavour of Segal's theorem \cite{Segal1979}) is the possibility of computing the homology (though only in a range) of a very algebraic object by means of homotopical methods. For families of maps our theorem is most advantageously combined with results on the cohomology of $\fB$, often themselves obtained by homotopical methods. Let us recall the following example from \cite{Ayala2008} to illustrate the kind of results one can obtain.
\begin{corollary}
Let $g \geq 2$, and let $\fC_g \to \fM_g$ be the universal family of smooth genus $g$ curves. Let $K$ be the graded vector space over $\bQ$ generated by the set $\{k_i\}$ in degrees $|k_i| = 2i$. Write $H^*(\bP^m;\bQ) = \bQ[x]/(x^{m+1})$ with $|x| = 2$. Then, for $d \geq 2g$ and $m \geq 1$, and trough degrees $* < \min((2m - 1)(d-2g) + 2m - 5, (g-5)/2)$ we have an isomorphism of graded rings
\[
    H_*(\Alg_{\fM_g}^d(\fC_g,\bP^m); \bQ) \cong \Sym\big( (K \otimes H^*(\bP^m;\bQ))_+ \big)
\]
where $\Sym$ is the free graded commutative algebra and $(-)_+$ is the positively graded summand.
\end{corollary}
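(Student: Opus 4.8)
The plan is to separate the statement into its algebraic content, which is \cref{theorem:precise-main}, and a topological computation of the continuous mapping stack that goes back to \cite{Ayala2008}. First I would apply \cref{theorem:precise-main} to the fibrewise class $\alpha$ of degree $d$: the fibres of $\fC_g \to \fM_g$ have dimension $n = 1$, and the value of $d(\fC_g,\alpha)$ is computed in \cite[Lemma~3.4]{Aumonier2024}, so that inserting these into the main theorem produces exactly the first argument of the minimum (the hypothesis $d \geq 2g$ being what keeps it non-negative). This reduces the corollary to computing $H_*\bigl(\Map^d_{\fM_g}(\fC_g,\bP^m);\bQ\bigr)$ as a graded ring and to checking that the answer is valid through degrees $* < (g-5)/2$.

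Next I would identify this topological mapping stack, following \cref{sec:teichmuller}. For $g \geq 2$ Teichmüller space is contractible and carries a properly discontinuous action of the mapping class group $\Gamma_g$, so that $\fM_g^\topp \simeq B\Gamma_g \simeq B\Diff^+(\Sigma_g)$ and $\fC_g^\topp$ is the corresponding $\Sigma_g$-bundle over it; unwinding the functor of points of \cref{sec:definitions} then yields a weak equivalence
\[
    \Map^d_{\fM_g}(\fC_g,\bP^m)^\topp \;\simeq\; \Map^d(\Sigma_g,\bP^m)/\!\!/\Diff^+(\Sigma_g),
\]
with $\Diff^+(\Sigma_g)$ acting by precomposition. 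Write $\cM^d_g(\bP^m)$ for this homotopy quotient, the moduli space of oriented genus-$g$ surface bundles equipped with a fibrewise continuous map of degree $d$ to $\bP^m$. (For $m \geq 2$ the fibrewise degree is a complete homotopy invariant of such a map, since $\bP^m \to K(\bZ,2)$ is $3$-connected, so there is no ambiguity in picking out path-components.)

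To compute the rational homology of $\cM^d_g(\bP^m)$ through degrees $* < (g-5)/2$ I would combine homological stability in the genus with a scanning theorem. Harer's stability theorem makes $H_*\bigl(\cM^d_g(\bP^m);\bQ\bigr)$ independent of $g$ in this range — this is the origin of the second argument of the minimum — and the extension of the Madsen--Weiss theorem to surfaces mapping to a background space (Cohen--Madsen) identifies the stable value with the rational homology of a path-component of the infinite loop space $\Omega^\infty(\bC P^\infty_{-1} \wedge \bP^m_+)$. Rationally, the homology of a path-component of $\Omega^\infty Z$ is, as a graded ring for the Pontryagin product, the free graded-commutative algebra on $\tilde H^{>0}(Z;\bQ)$; taking $Z = \bC P^\infty_{-1} \wedge \bP^m_+$ and noting that $K := \tilde H^*(\bC P^\infty_{-1};\bQ)$ — the reduced rational cohomology of the Madsen--Tillmann spectrum, whose positive-degree part generates freely the stable cohomology $\bQ[\kappa_1,\kappa_2,\dots]$ of $\fM_g$ — satisfies $\tilde H^{>0}(\bC P^\infty_{-1} \wedge \bP^m_+;\bQ) = (K \otimes H^*(\bP^m;\bQ))_+$, we obtain precisely $\Sym\bigl((K \otimes H^*(\bP^m;\bQ))_+\bigr)$. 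Together with the first paragraph this gives the asserted ring isomorphism in the minimum of the two ranges.

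I expect the main obstacle to be the computation of the previous paragraph, which is however not new. A route closer in spirit to \cite{Ayala2008} avoids quoting Cohen--Madsen and instead runs the Leray--Serre spectral sequence of the fibration $\Map^d(\Sigma_g,\bP^m) \to \cM^d_g(\bP^m) \to B\Gamma_g$: one computes $H^*(\Map^d(\Sigma_g,\bP^m);\bQ)$ with its $\Gamma_g$-action by the rational homotopy theory of mapping spaces (Haefliger, Brown--Szczarba) — the relevant classes being the slant products of the evaluation classes $\mathrm{ev}^*(x^k)$ against a basis of $H_*(\Sigma_g;\bQ)$, on which $\Gamma_g$ acts through its action on $H_1(\Sigma_g;\bQ)$ as $\mathrm{Sp}_{2g}(\bZ)$ — and then feeds in the stable cohomology of $\Gamma_g$ with symplectic coefficients (Madsen--Weiss together with the twisted Mumford conjecture of Kawazumi and Randal-Williams), under which every contribution of the fibre lying in a non-trivial isotypic component for the $\mathrm{Sp}_{2g}(\bZ)$-action dies and the survivors, together with $\bQ[\kappa_1,\kappa_2,\dots] = H^*(B\Gamma_g;\bQ)$, assemble into the claimed free graded-commutative algebra. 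Either way the genuinely delicate points are the collapse of the relevant spectral sequence and the determination of the multiplicative structure — one cannot, for instance, simply tensor $H^*(B\Gamma_g;\bQ)$ with the fibre cohomology, since the $\Gamma_g$-action is non-trivial — and it is exactly here that a Madsen--Weiss-type input, rather than bare knowledge of $H^*(\fM_g;\bQ)$, is indispensable. The genuinely new ingredient supplied by this note is \cref{theorem:precise-main}, which carries out the comparison between the algebraic stack $\Alg^d_{\fM_g}(\fC_g,\bP^m)$ and the continuous mapping stack; everything else is an assembly of known topology.
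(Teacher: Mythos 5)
Your proposal is correct and, in its first route, matches the paper's proof. The paper's proof of this corollary is a single sentence: ``This is a consequence of a result of Cohen and Madsen \cite{CohenMadsen2009}\ldots see the introduction of \cite{Ayala2008} for more details.'' What you do is precisely fill in the implicit steps: apply \cref{theorem:precise-main} with $n=1$ and $d(\fC_g,\alpha)=d-2g$ to reduce to $\Map^d_{\fM_g}(\fC_g,\bP^m)$, identify its homotopy type as $\Map^d(\Sigma_g,\bP^m)\sslash\Diff^+(\Sigma_g)$ (which is the content of \cref{sec:teichmuller}), then invoke Harer stability plus Cohen--Madsen to identify the stable rational homology with $\Sym\bigl(\tilde H^{>0}(\bC P^\infty_{-1}\wedge\bP^m_+;\bQ)\bigr)=\Sym\bigl((K\otimes H^*(\bP^m;\bQ))_+\bigr)$. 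The alternative Leray--Serre/twisted-Mumford-conjecture argument you sketch in the final paragraph is a genuine second route not indicated in the paper; it is useful as a sanity check but the Cohen--Madsen route is both shorter and what the paper intends.
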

\begin{proof}
This is a consequence of a result of Cohen and Madsen \cite{CohenMadsen2009} on the stable homology of the moduli of curves equipped with a map to a background space. See the introduction of \cite{Ayala2008} for more details.
\end{proof}

\subsection*{Acknowledgements} I would like to thank Eric Ahlqvist for help with stacks. I was supported by Dan Petersen's Wallenberg Scholar fellowship.

\section{Recollections on stacks}

In this note, we shall work with both algebraic and topological stacks. We denote by $\catSch$ the category of complex schemes, by $\catSchlft$ the subcategory of schemes locally of finite type, and by $\catLCH$ the category of locally compact Hausdorff spaces. As usual, an algebraic stack is a stack on the site $\catSch$ with the étale topology. A topological stack is a stack on the site $\catLCH$ with the open cover topology. Let us recall from \cite[Section~4]{Jansen2024} how to obtain a topological stack from a Deligne--Mumford stack:
\begin{definition}\label{def:topologification}
Let $\fM$ be a Deligne--Mumford stack. Its \emph{underlying topological stack} $\fM^\topp$ is the topological stack obtained by sheafifying the presheaf
\begin{align*}
    \catLCH^\opp &\lra \catGpd \\
    T &\longmapsto \overline{\fM}\big(\Spec(\Map(T, \bC))\big)
\end{align*}
where $\overline{\fM}$ is the left Kan extension of $\fM$ along the inclusion $\catSchlft \hookrightarrow \catSch$.
\end{definition}
\begin{remark}
Contrary to \cite[Section~4]{Jansen2024}, we prefer to work with schemes \emph{locally} of finite type and drop the second countability assumption on our category of topological spaces. This is inconsequential as the analytification functor is defined on complex schemes locally of finite type (see \cite[Théorème et définition~1.1]{Raynaud1971}) and the category $\catLCH$ admits fibre products (see \stacks{09WZ} and \cite[Remark~2.4]{Jansen2024}). We also work with stacks instead of $\infty$-stacks, but sheafification preserves this truncatedness.
\end{remark}

\begin{remark}
If $X$ is a scheme locally of finite type, its underlying topological space $X^\topp$ constructed by \cref{def:topologification} agrees with its analytification $X^\an$, see e.g. \cite[Lemma~4.6]{Jansen2024}.
\end{remark}

We say that a morphism of topological stacks $T \to \fT$ with $T \in \catLCH$ is \emph{étale} if it is representable and a local homeomorphism. We can associate a homotopy type to a topological stack as follows:
\begin{definition}\label{def:homotopy-type}
Let $\fT$ be a topological stack with an étale presentation $T \to \fT$. Its \emph{homotopy type} is the homotopy type of the geometric realisation of the semi-simplicial space
\[
    [p] \longmapsto T \times_\fT \cdots \times_\fT T \quad \text{($p$ times).}
\]
The homotopy type of a Deligne--Mumford stack is defined to be that of its underlying topological stack.
\end{definition}

\begin{remark}
Given an étale presentation $U \to \fX$ of a Deligne-Mumford stack we obtain an étale presentation (i.e. a surjective representable local homeomorphism) of the underlying topological stack $U^\topp \to \fX^\topp$, see \cite[Proposition~4.10]{Jansen2024}. Hence \cref{def:homotopy-type} agrees with the usual definition in the literature, see e.g. \cite{Ebert2009}.
\end{remark}

Any topological invariant of a topological stack, e.g. its singular homology, is then defined via its underlying homotopy type. In particular, the following observation will be useful.
\begin{lemma}\label{lemma:semisimplicial-comparison}
Let $X_\bullet \to Y_\bullet$ be a morphism of semi-simplicial spaces. Suppose that there exists an integer $N \geq 0$ such that $X_p \to Y_p$ is a homology isomorphism in degrees $* < N$ for all $p \geq 0$. Then $\lVert X_\bullet \rVert \to \lVert Y_\bullet \rVert$ is a homology isomorphism in degrees $* < N$.
\end{lemma}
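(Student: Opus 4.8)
The plan is to exploit the skeletal filtration of the (semi-simplicial, hence automatically well-behaved) geometric realisation and compare the associated long exact sequences degree by degree. Recall that $\lVert X_\bullet \rVert$ carries an exhaustive filtration by closed subspaces $\lVert X_\bullet \rVert_{(k)}$, with $\lVert X_\bullet \rVert_{(0)} = X_0$, and with $\lVert X_\bullet \rVert_{(k)}$ obtained from $\lVert X_\bullet \rVert_{(k-1)}$ as the pushout of $X_k \times \Delta^k$ along $X_k \times \partial\Delta^k \to \lVert X_\bullet \rVert_{(k-1)}$. Since $\partial\Delta^k \hookrightarrow \Delta^k$ is a cofibration, so is $X_k \times \partial\Delta^k \hookrightarrow X_k \times \Delta^k$, and hence so is each inclusion $\lVert X_\bullet \rVert_{(k-1)} \hookrightarrow \lVert X_\bullet \rVert_{(k)}$; in particular all the pairs in sight are good pairs. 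The morphism $X_\bullet \to Y_\bullet$ is compatible with these filtrations, and by excision together with the Künneth formula one gets a natural identification
\[
    H_j\big(\lVert X_\bullet \rVert_{(k)}, \lVert X_\bullet \rVert_{(k-1)}\big) \cong H_j\big(X_k \times \Delta^k,\, X_k \times \partial\Delta^k\big) \cong H_{j-k}(X_k),
\]
and likewise for $Y_\bullet$.

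First I would prove, by induction on $k$, that $\lVert X_\bullet \rVert_{(k)} \to \lVert Y_\bullet \rVert_{(k)}$ is a homology isomorphism in degrees $* < N$. The base case $k = 0$ is precisely the hypothesis for $p = 0$. For the inductive step, compare the long exact sequences of the pairs $(\lVert X_\bullet \rVert_{(k)}, \lVert X_\bullet \rVert_{(k-1)})$ and $(\lVert Y_\bullet \rVert_{(k)}, \lVert Y_\bullet \rVert_{(k-1)})$ and apply the five lemma in degrees $* < N$. The subspace terms are isomorphisms in that range by the inductive hypothesis; the relative terms identify with $H_{*-k}(X_k) \to H_{*-k}(Y_k)$, which is an isomorphism whenever $* < N + k$ by hypothesis. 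Since we only ever climb with $k \geq 1$, the shift by $k$ ensures that the two relevant relative degrees (namely $*$ and $*+1$) both lie below $N + k$ as soon as $* < N$, which is exactly what the five lemma needs.

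Finally, since $\lVert X_\bullet \rVert = \colim_k \lVert X_\bullet \rVert_{(k)}$ is a sequential colimit along cofibrations, singular homology commutes with it, so $H_n(\lVert X_\bullet \rVert) = \colim_k H_n(\lVert X_\bullet \rVert_{(k)})$, and similarly for $Y_\bullet$. For $n < N$ every map in this colimit system is an isomorphism by the previous step, hence so is the induced map $H_n(\lVert X_\bullet \rVert) \to H_n(\lVert Y_\bullet \rVert)$. The only genuinely delicate point is the identification of the relative homology of the skeletal pairs — this is why it matters that the semi-simplicial realisation filters by honest cofibrations with no cofibrancy hypotheses on the $X_p$ — together with the bookkeeping of the degree shift by $k$, which is precisely what makes the hypothesis "isomorphism in degrees $* < N$" (rather than $* \leq N$) enough. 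Equivalently, one may package the whole argument in the spectral sequence of the skeletal filtration, with $E^1_{p,q} = H_q(X_p)$ converging to $H_{p+q}(\lVert X_\bullet \rVert)$: a map of first-quadrant spectral sequences that is an isomorphism on $E^1_{p,q}$ for all $q < N$ is an isomorphism on $E^\infty_{p,q}$ for $q < N$, hence on $H_n$ for $n < N$.
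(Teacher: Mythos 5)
Your proof is correct and takes essentially the same route as the paper, which simply invokes the skeletal first-quadrant spectral sequence $E^1_{p,q}=H_q(X_p;\bZ)\Rightarrow H_{p+q}(\lVert X_\bullet\rVert;\bZ)$ and compares; your induction over the skeletal filtration with the five lemma is precisely what that comparison unwinds to. (A small caveat on your closing remark: isomorphism on $E^1_{p,q}$ for all $q<N$ only propagates to $E^\infty_{p,q}$ on the triangle $p+q<N$, not the full strip $q<N$ — outgoing differentials from entries with $q<N$ but $p+q\geq N+1$ can exit the strip — but the triangle is exactly what the abutment in total degree $<N$ requires, so the conclusion is unaffected.)
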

\begin{proof}
There is a skeletal first quadrant spectral sequence of signature
\[
    E^1_{p,q} = H_q(X_p; \bZ) \implies H_{p+q}(\lVert X_\bullet \rVert; \bZ)
\]
and likewise for $Y_\bullet$ (see e.g. \cite[Section~1.4]{EbertRandalWilliams2019}). The lemma follows by comparing the spectral sequences.
\end{proof}

\section{Families of maps}\label{sec:definitions}

Let $\fB$ be a Deligne--Mumford stack, and $\fX \to \fB$ be a smooth projective morphism representable by schemes. We shall say that a map $X \to B$ between schemes, or more generally algebraic spaces or stacks, (resp. topological spaces) is an algebraic (resp. topological) $\fX$-family over $B$ if it is a pulled back family $X = f^*\fX$ for some $f \colon B \to \fB$ (resp. $X = f^*\fX^\topp$ and $f \colon B \to \fB^\topp$). In this section, we shall first explain how to define the degree of a map from an $\fX$-family, and then properly define the stacks appearing in our main theorem.

\subsection{Degree of a family of maps}
We now explain what we mean by the degree of a map $\fX \to \bP_\fB^m$ over $\fB$, which we think of as a family of maps parametrised by the base $\fB$. First recall that for a smooth projective complex variety $X$, the degree of a map $f \colon X \to \bP^m$ is defined to be the cohomology class $f^*(h) \in H^2(X; \bZ)$ where $h \in H^2(\bP^m; \bZ)$ is the positive generator. When thinking of $\fX \to \fB$ as a family of varieties $X_b$ parameterised by $b \in \fB$, we only have access to the groups $H^2(X_b;\bZ)$ up to monodromy automorphisms. Hence only the classes that are invariant under monodromy give well-defined fibrewise degrees. This motivates the following (we write $\underline{A}$ for the constant sheaf associated to an abelian group $A$):
\begin{definition}\label{def:fibrewise-degree}
Let $p \colon X \to B$ be a topological $\fX$-family and let $g \colon X \to \bP^m$ be a continuous map. There is an induced map of sheaves $\underline{H^2(\bP^m;\bZ)} \to R^2p_*\underline{\bZ}$ on $B$, and we define the fibrewise degree of $g$ to be the image $\deg(g) \in H^0(B; R^2p_*\underline{\bZ})$ of the positive generator $h \in H^0(B, \underline{H^2(\bP^m;\bZ)})$ in sheaf cohomology.
\end{definition}

\begin{remark}\label{remark:fibre-degree}
This definition recovers the usual degree when $B$ is a point. More generally, for a map $g \colon X \to \bP^m$ and a point $b \in B$, we obtain a map on the fibre $g_b \colon X_b \to \bP^m$ of degree $b^*(\deg(g)) \in H^0(\{b\}, H^2(X_b;\bZ)) \cong H^2(X_b;\bZ)$ by base change along the inclusion $\{b\} \subset B$. This justifies our choice of definition.
\end{remark}

More generally, our \cref{def:fibrewise-degree} makes sense for the universal family $\pi \colon \fX^\topp \to \fB^\topp$ provided one uses the theory of sheaves on topological stacks and their cohomology. In particular, a class in $H^0(\fB^\topp; R^2\pi_*\underline{\bZ})$ gives rise, by base change, to a pulled back class in $H^0(B; R^2p_*\underline{\bZ})$ for any $\fX$-family $p \colon X \to B$.

\subsection{Ampleness}
We can only expect to fruitfully compare algebraic and continuous maps when their degree is ``big enough". This can be quantified using ampleness qualities of line bundles, or intuitively: how much sections of a given line can interpolate between points.
\begin{definition}
Let $\cL$ be a line bundle on a smooth projective variety $X$ and $d \geq 0$ be an integer. We say that $\cL$ \emph{separates $d$ points} if for any choice of $d$ distinct points $x_1,\dotsc,x_d \in X$ the evaluation map
\[
    H^0(X, \cL) \lra \bigoplus_{i=1}^d \cL|_{x_i}, \quad s \longmapsto (s(x_1),\dotsc,s(x_d))
\]
is surjective. We write $d(X, \cL) \geq 0$ for the biggest integer $d$ such that $\cL$ separates $(d+1)$ points.
\end{definition}

As in \cite{Aumonier2024}, we will need to consider line bundles with the same first Chern class but different holomorphic structures, as well as require those line bundles to be ample enough (see \cite[Section~2]{Aumonier2024}). We thus adopt the following notation:
\begin{definition}
Let $X$ be a smooth projective variety and $\alpha \in H^2(X;\bZ)$. We define
\[
    d(X, \alpha) = \begin{cases}
        \inf\limits_{[\cL] \in \Pic^\alpha(X)} \ d(X,\cL) & \text{if } \Pic^\alpha(X) \neq \emptyset \text{ and } \alpha - c_1(K_X) \text{ is ample}, \\
        -\infty & \text{else}.
        \end{cases}
\]
\end{definition}
The ampleness condition is chosen to ignore special algebraic phenomena which appears when considering algebraic maps of low degree. Now, for a family of varieties and a given degree, our main theorem relies on a bound uniform on all possible varieties appearing in the family. We package it into the following:
\begin{definition}\label{def:uniform-d-alpha}
Let $\pi \colon \fX \to \fB$ be a smooth projective morphism representable by schemes, and $\alpha \in H^0(\fB^\topp; R^2\pi_*\underline{\bZ})$. We define
\[
    d(\fX, \alpha) = \inf\limits_{b \in \fB(\bC)} d(X_b,\alpha_b)
\]
where $X_b = \Spec(\bC) \times_\fB \fX$ is the variety corresponding to $b \in \fB(\bC)$ and $\alpha_b$ is the pullback of $\alpha$ along $b$ (see \cref{remark:fibre-degree}).
\end{definition}

\subsection{Stacks of families of maps}

We define the two stacks of interest in this note. Let $\alpha \in H^0(\fB^\topp, R^2\pi_*\underline{\bZ})$ be a degree. We begin with the algebraic stack of families of algebraic maps.

\begin{definition}
Let $\Alg_\fB^\alpha(\fX, \bP^m)$ be the following stack on the site $\catSch$. An object of $\Alg_\fB^\alpha(\fX, \bP^m)(T)$, $T \in \catSch$, is a pair $(f,g)$ of an $f \in \fB(T)$ and a map $g \colon f^*\fX \to \bP^m$ of fibrewise degree $\alpha$. An isomorphism $(f,g) \to (f',g')$ is an isomorphism $f \to f'$ in $\fB$ inducing an isomorphism $\varphi$ on the pulled back families such that $g' \circ \varphi = g$.
\end{definition}

We offer two motivating examples.
\begin{example}
Let $g \geq 2$ and let $\pi \colon \fC_g \to \fM_g$ be the universal family of smooth genus $g$ curves. As any such curve is canonically oriented the sheaf $R^2\pi_*\underline{\bZ}$ is isomorphic to the constant sheaf $\underline{\bZ}$. Hence a degree is simply an integer $d \in \bZ$. In fact, by Poincaré duality, a map $\fC_g \to \bP^m$ of degree $d$ is a morphism such that its restriction to each fibre $f \colon C \to \bP^m$ satisfies $f_*[C] = d \cdot [\bP^1] \in H_2(\bP^m;\bZ)$.
\end{example}

\begin{example}
Let $X$ be a smooth projective variety of dimension at least $3$ and $\cL$ be a very ample line bundle on it. Let $|\cL|^\mathrm{ns} \subset |\cL|$ be the open subset of the linear system which parameterises non-singular hypersurfaces. Write $\pi \colon \cU \to |\cL|^\mathrm{ns}$ for the universal family. By the Lefschetz hyperplane theorem the sheaf $R^2\pi_*\underline{\bZ} \cong \underline{H^2(X;\bZ)}$ is constant. Given $\alpha \in H^2(X;\bZ)$, a map $\cU \to \bP^m$ of degree $\alpha$ restricts to maps $Z \to \bP^m$ of degree $\iota^*\alpha$ where $\iota \colon Z \hookrightarrow X$ are elements of $|\cL|^\mathrm{ns}$.
\end{example}

The definition of the topological stack of families of continuous maps is similar:
\begin{definition}
Let $\Map_\fB^\alpha(\fX,\bP^m)$ be the following stack on the site $\catLCH$. An object of $\Map_\fB^\alpha(\fX,\bP^m)(T)$, $T \in \catLCH$, is a pair $(f,g)$ of an $f \in \fB^\topp(T)$ and a continuous map $g \colon f^*\fX^\topp \to \bP^m$ of fibrewise degree $\alpha$. An isomorphism $(f,g) \to (f',g')$ is an isomorphism $f \to f'$ in $\fB^\topp$ inducing an isomorphism $\varphi$ on the pulled back families such that $g' \circ \varphi = g$.
\end{definition}

In the rare cases when $\Alg_\fB^\alpha(\fX, \bP^m)$ is a scheme, there is certainly a map from its analytification to $\Map_\fB^\alpha(\fX,\bP^m)$ simply given by noticing that an algebraic map is continuous. In general, we need to recall the construction of \cref{def:topologification}: let $T \in \catLCH$ and consider the morphism of prestacks
\[
    \overline{\Alg_\fB^\alpha(\fX, \bP^m)}(\Spec(\Map(T,\bC))) \to \Alg_\fB^\alpha(\fX, \bP^m)(\Spec(\Map(T,\bC))) \to \Map_\fB^\alpha(\fX,\bP^m)(T)
\]
where the first arrow is the canonical morphism from the left Kan extension, and the second arrow sends a family of maps $(f,g)$ to the family $(f \circ \phi, g \circ \varphi)$ where $\varphi \colon T \to \Spec(\Map(T, \bC))$ is the canonical morphism and $\phi \colon \varphi^*f^*\fX^\topp \to f^*\fX^\topp$ is the induced morphism from the pulled back family:
\[
\begin{tikzcd}
\varphi^*f^*\fX^\topp \arrow[r, "\phi"] \arrow[d] & f^*\fX^\topp \arrow[r, "g"] \arrow[d] & \bP^m \\
T \arrow[r, "\varphi"]  & {\Spec(\Map(T,\bC))}  &
\end{tikzcd}
\]
We have used the same letter $g$ for the algebraic map $\fX \to \bP^m$ and its induced continuous map after analytification. By universal property of sheafification we obtain a morphism of topological stacks
\[
    \Alg_\fB^\alpha(\fX, \bP^m)^\topp \lra \Map_\fB^\alpha(\fX,\bP^m)
\]
which we shall refer as the natural morphism (and we will do the same for its restriction to $\fX$-families). Let us finally properly state our main theorem:
\begin{theorem}\label{theorem:precise-main}
Let $\fB$ be a Deligne--Mumford stack locally of finite type. Let $\pi \colon \fX \to \fB$ be a smooth projective morphism representable by schemes. Suppose that all the fibres of $\pi$ have the same complex dimension $n$. Let $\alpha \in H^0(\fB^\topp; R^2\pi_*\underline{\bZ})$. Then the natural map
\[
    \Alg_\fB^\alpha(\fX, \bP^m)^\topp \lra \Map_\fB^\alpha(\fX,\bP^m)
\]
induces an isomorphism in singular integral homology in the range of degrees $* < (2m - 2n +1) \cdot d(\fX,\alpha) + 2m - 2n - 3$.
\end{theorem}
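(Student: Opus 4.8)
The strategy is to reduce the statement, via an étale atlas of $\fB$ and \cref{lemma:semisimplicial-comparison}, to the non-parameterised comparison theorems \cite{Segal1979,Mostovy2010,Aumonier2024}, which hold along every fibre of $\pi$ in a range of degrees that is uniform precisely because $d(\fX,\alpha)$ is defined as an infimum over fibres.

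Concretely, I would choose an étale atlas $U\to\fB$ by a scheme locally of finite type and let $U_\bullet$ be its Čech nerve. Pulling back along $U_\bullet\to\fB$ gives smooth projective families $Y_\bullet\to U_\bullet$ with fibrewise degrees $\beta_\bullet$, and the base changes of $\Alg_\fB^\alpha(\fX,\bP^m)$ and $\Map_\fB^\alpha(\fX,\bP^m)$ along $U_\bullet\to\fB$ (resp.\ $U_\bullet^\topp\to\fB^\topp$) are represented by the relative $\Hom$-schemes $A_\bullet\to U_\bullet$ of fibrewise degree-$\beta_\bullet$ maps $Y_\bullet\to\bP^m$ — schemes locally of finite type over the $U_\bullet$, cut out as unions of components of relative Hilbert schemes of graphs — and by the relative mapping spaces $M_\bullet\to U_\bullet^\topp$ of fibrewise continuous such maps, with the compact–open topology. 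A smooth projective morphism of schemes analytifies to a topological fibre bundle, so each $M_p\to U_p^\topp$ is a fibre bundle. The semi-simplicial spaces $A_\bullet^\topp$ and $M_\bullet$ compute, via \cref{def:homotopy-type}, the homotopy types of $\Alg_\fB^\alpha(\fX,\bP^m)^\topp$ and $\Map_\fB^\alpha(\fX,\bP^m)$, compatibly with the natural map, so by \cref{lemma:semisimplicial-comparison} it suffices to prove that each
\[
    A_p^\topp\lra M_p
\]
is a homology isomorphism in degrees $*<N:=(2m-2n+1)\,d(\fX,\alpha)+2m-2n-3$. Writing $\fB$ as a filtered union of quasi-compact opens, we may assume in addition that every space in sight has finitely generated homology in each degree.

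Now fix $p$ and set $V=U_p$, $A=A_p$, $M=M_p$, $\beta=\beta_p$, $Y=Y_p$. Both $A^\topp$ and $M$ lie over $V^\topp$, and $A^\topp\to M$ lies over it; I would compare the Leray spectral sequences of the two maps to $V^\topp$. The Leray sheaf of $M\to V^\topp$ is the local system $v\mapsto H^q(\Map^{\beta_v}(Y_v,\bP^m))$, so the comparison on $E_2$-pages is a stalkwise question, for which it is enough that the fibrewise maps $\Alg^{\beta_v}(Y_v,\bP^m)^\topp\to\Map^{\beta_v}(Y_v,\bP^m)$ are homology isomorphisms in degrees $*<N$. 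This is the non-parameterised theorem of \cite{Aumonier2024} (and \cite{Segal1979,Mostovy2010} for curves and projective spaces): it applies since $d(Y_v,\beta_v)\ge d(\fX,\alpha)$ for all $v$ by \cref{def:uniform-d-alpha}, so its range contains $*<N$. Running the comparison with field coefficients and dualising (legitimate by finite type) gives an isomorphism of Leray sheaves in degrees $<N$; the spectral-sequence comparison — as in the proof of \cref{lemma:semisimplicial-comparison} — then yields an isomorphism on $H_*(-;\bF)$ in degrees $*<N$ for every field $\bF$, hence an isomorphism on integral homology in degrees $*<N$.

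The step I expect to be the genuine obstacle is the topological control of the relative $\Hom$-scheme over the base, needed to identify the stalk at $v$ of the Leray sheaf of $A^\topp\to V^\topp$ with $H^*(\Alg^{\beta_v}(Y_v,\bP^m)^\topp)$ through degree $N$. Unlike $M\to V^\topp$, the morphism $A^\topp\to V^\topp$ is not a fibre bundle — already $A\to V$ is not proper — so this is not formal: one must show that $\Alg^{\beta_v}(Y_v,\bP^m)^\topp$ includes homology-isomorphically, in degrees $*<N$, into the preimage of a small neighbourhood of $v$. I would hope to obtain this from the codimension and transversality estimates already underlying \cite{Aumonier2024}, made uniform over the family, together with the constructibility of higher direct images of algebraic morphisms — the ``ample enough'' condition packaged into $d(\fX,\alpha)>-\infty$ being what forces the relevant bad loci to have codimension beyond $N$. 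For curves varying in $\fM_g$ the point is simpler: for $\deg\beta>2g-2$ the relative $\Hom$-scheme is smooth over $\fM_g$, and one can argue over contractible Teichmüller space, which is the route taken in \cref{sec:teichmuller}.
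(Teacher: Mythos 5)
You have correctly identified both the overall strategy (reduce to an \'etale cover, apply \cref{lemma:semisimplicial-comparison}, and compare levelwise against the non-parameterised theorem of \cite{Aumonier2024}) and, crucially, the genuine obstacle: $A_p \to U_p$ is not a fibre bundle, so a stalkwise comparison of Leray spectral sequences is not available without further input. But your proposal stops at naming the obstacle and ``hoping'' to resolve it by uniform codimension estimates; this is a real gap, and moreover it is not how the paper proceeds.

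The paper's resolution is to not take an arbitrary \'etale atlas. Because $\fX \to \fB$ is smooth, one can choose $U \to B$ \'etale surjective so that the pulled back family $X_U \to U$ \emph{admits a section} (EGA~IV, 17.16.3.(ii)), and this persists to all the $p$-fold products $U_p$. With a section in hand, the relative Picard scheme $\Pic^\alpha(X_U/U)$ exists and carries a Poincar\'e line bundle $\cP$, and the construction of \cite[Section~2]{Aumonier2024} then exhibits $\Alg^\alpha_U(X_U,\bP^m)$ as an open subscheme of $\underline{\Proj}_{\Pic^\alpha(X_U/U)}(\Sym\ (q_*\cP^{\oplus m+1})^\vee)$, i.e.\ as an open subset of the total space of a fibre bundle over $U$. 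This is exactly the geometric structure that \cite[Lemma~4.7]{Aumonier2024} is designed to compare against the honest fibre bundle $\Map^\alpha_U(X_U,\bP^m) \to U^\topp$, using \cite[Theorem~A]{Aumonier2024} on the fibres. Your $A_p$ is a Hom-scheme over an unprepared base and does not a priori come with this ``open in a projective bundle'' presentation, which is precisely why you cannot identify the stalks of the derived pushforward. So the missing idea is: refine the atlas to force a section, use the Poincar\'e bundle to get global projective-bundle structure, and let the already-proven relative comparison lemma do the work, instead of a raw Leray argument.

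Two smaller points. First, in the C\v ech nerve of an \'etale atlas of a Deligne--Mumford stack the levels $U_p$ for $p\ge 2$ are in general only algebraic spaces, not schemes; the paper interposes an extra reduction (\cref{lemma:comparison-with-spaces}) to handle this, which your proposal silently elides. Second, the reduction ``we may assume every space in sight has finitely generated homology in each degree'' is not needed once one uses \cite[Lemma~4.7]{Aumonier2024} rather than a coefficient-field-and-dualise Leray argument, and it is not obviously justified as stated (passing to a quasi-compact open of $\fB$ does not control the homology of the algebraic map spaces in each semi-simplicial level).
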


\section{Proof of the main theorem}

In this section we fix a Deligne--Mumford stack $\fB$ locally of finite type, and a smooth projective morphism $\pi \colon \fX \to \fB$ representable by schemes. We assume that all the fibres of $\pi$ have the same complex dimension $n$. We begin with a few lemmas and shall end this section by proving \cref{theorem:precise-main}.

As usual with stacks, the proof starts with a key lemma about schemes and then uses various reductions to deduce the general statement on stacks. The main difference between the case of a single variety treated in \cite{Aumonier2024} and the family case is the existence of a Poincaré line bundle. It allows to construct the stack of algebraic maps as a scheme, as done in the following lemma:
\begin{lemma}\label{lemma:comparison-with-section}
Let $X \to B$ be an algebraic $\fX$-family between schemes locally of finite type. Suppose that it admits a section. Then the natural morphism
\[
    \Alg_B^\alpha(X, \bP^m)^\topp \to \Map_B^\alpha(X, \bP^m)
\]
induces an isomorphism on singular homology in the range of degrees $* < (2m - 2n +1) \cdot d(\fX,\alpha) + 2m - 2n - 3$.
\end{lemma}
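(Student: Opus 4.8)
The plan is to deduce the statement from the single-variety comparison of \cite{Aumonier2024} applied to each fibre of $X \to B$. The section of $X \to B$ is used, exactly as its r\^ole in the hypothesis suggests, to realise $\Alg_B^\alpha(X,\bP^m)$ as an honest scheme over $B$; once this is done, both sides are spaces fibred over $B^\topp$ whose fibres over $b \in B(\bC)$ are $\Alg^{\alpha_b}(X_b,\bP^m)^\topp$ and $\Map^{\alpha_b}(X_b,\bP^m)$, and one concludes with a semi-simplicial resolution of $B^\topp$ and \cref{lemma:semisimplicial-comparison}. We may assume $d(\fX,\alpha) \neq -\infty$, for otherwise the asserted range of degrees is empty; then $d(\fX,\alpha) \geq 0$, and by \cref{def:uniform-d-alpha}, for every $b \in B(\bC)$ the set $\Pic^{\alpha_b}(X_b)$ is non-empty and $\alpha_b - c_1(K_{X_b})$ is ample.

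First I would construct $\Alg_B^\alpha(X,\bP^m)$ as a scheme over $B$. Because $X \to B$ is projective and admits a section, the relative Picard functor is rigidified and representable by a scheme $\Pic_{X/B} \to B$ carrying a Poincar\'e line bundle; restricting to the open and closed part $P = \Pic^\alpha_{X/B}$ of fibrewise class $\alpha$ and twisting the Poincar\'e bundle by a sufficiently positive power of a relative polarisation (the fibrewise ampleness of $\alpha_b - c_1(K_{X_b})$ being what forces the relevant higher cohomology to vanish uniformly in $b$), one obtains a locally free pushforward and identifies $\Alg_B^\alpha(X,\bP^m)$, just as in \cite[Section~2]{Aumonier2024} but over $P$ rather than a point, with the open subscheme of a projective bundle over $P$ where the universal $(m+1)$-tuple of sections is base-point free. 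This is a scheme locally of finite type over $B$ with fibre $\Alg^{\alpha_b}(X_b,\bP^m)$ over $b$. On the topological side, $X^\topp \to B^\topp$ is a locally trivial fibre bundle with compact fibres (a proper smooth morphism of complex analytic spaces is such), so $\Map_B^\alpha(X,\bP^m) \to B^\topp$ is the union of the components of fibrewise degree $\alpha$ of the associated fibrewise mapping space, a fibre bundle with fibre $\Map^{\alpha_b}(X_b,\bP^m)$.

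Granting --- see the obstacle below --- that $\Alg_B^\alpha(X,\bP^m)^\topp \to B^\topp$ is also a locally trivial fibre bundle, I would finish as follows. Choose an open cover $\{U_i\}$ of $B^\topp$ by sets that are contractible and over which both bundles trivialise, refined so that all non-empty finite intersections are contractible, and let $B_\bullet$ be its \v Cech nerve; pulling back gives semi-simplicial spaces $A_\bullet \to B_\bullet$ and $M_\bullet \to B_\bullet$ with $\lVert A_\bullet \rVert \simeq \Alg_B^\alpha(X,\bP^m)^\topp$ and $\lVert M_\bullet \rVert \simeq \Map_B^\alpha(X,\bP^m)$, together with a map $A_\bullet \to M_\bullet$ over $B_\bullet$. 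Over each contractible piece $U_{i_0 \cdots i_p}$ the map $A_p \to M_p$ restricts, up to homotopy equivalence, to the single-variety comparison $\Alg^{\alpha_b}(X_b,\bP^m)^\topp \to \Map^{\alpha_b}(X_b,\bP^m)$, which by \cite{Aumonier2024} is a homology isomorphism in degrees $* < (2m - 2n + 1)\cdot d(X_b,\alpha_b) + 2m - 2n - 3$, hence in degrees $* < (2m - 2n + 1)\cdot d(\fX,\alpha) + 2m - 2n - 3$ since $d(X_b,\alpha_b) \geq d(\fX,\alpha)$. Thus $A_p \to M_p$ is a homology isomorphism in that range for every $p$, and \cref{lemma:semisimplicial-comparison} gives the lemma.

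The main obstacle is precisely the local triviality of $\Alg_B^\alpha(X,\bP^m)^\topp \to B^\topp$: this is not automatic for a non-proper morphism of schemes, and it is exactly what makes the fibrewise reduction legitimate. I would obtain it from Thom's first isotopy lemma applied to a proper compactification of $\Alg_B^\alpha(X,\bP^m) \to B$ --- the ambient projective bundle over $P = \Pic^\alpha_{X/B}$ will do, or a relative Hilbert scheme --- Whitney-stratified compatibly with the open stratum $\Alg_B^\alpha(X,\bP^m)$, the point being that the fibrewise ampleness of $\alpha_b$ forces the incidence loci controlling base-point freeness to have constant fibre dimension, so that the stratification, and hence the local triviality, are uniform over $B^\topp$ (when $B$ is singular one stratifies $B$ as well and argues stratum by stratum). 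Everything else is a faithful transcription into families of the single-variety argument of \cite{Aumonier2024}.
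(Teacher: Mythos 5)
Your construction of $\Alg_B^\alpha(X,\bP^m)$ as an open subscheme of a $\Proj$-bundle over $\Pic^\alpha(X/B)$, using the section to rigidify the Picard functor and obtain a Poincar\'e bundle, is exactly what the paper does, and the fibrewise reduction to \cite[Theorem~A]{Aumonier2024} is also the right idea. But the way you propose to make the fibrewise reduction rigorous diverges from the paper at precisely the point you flag as ``the main obstacle,'' and the gap there is real.

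The paper never establishes (and does not need) local triviality of $\Alg_B^\alpha(X,\bP^m)^\topp \to B^\topp$. Having written $\Alg_B^\alpha(X,\bP^m)$ as an open subset of the total space of a fibre bundle over $B$ (the $\Proj$-bundle over $\Pic^\alpha(X/B)$, both of which are proper and smooth over $B$, hence topologically locally trivial by Ehresmann), and having observed that $\Map_B^\alpha(X,\bP^m)\to B^\topp$ is a genuine fibre bundle, the paper simply invokes \cite[Lemma~4.7]{Aumonier2024}. That lemma is tailored to exactly this configuration: it compares an open subset of a fibre bundle with a fibre bundle over the same base, given that the comparison is a fibrewise homology isomorphism in a range, with no local-triviality hypothesis on the open subset. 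Your \v Cech-nerve argument by contractible opens, by contrast, genuinely requires local triviality of both sides (otherwise $A_p$ over a contractible $U_{i_0\cdots i_p}$ is not homotopy equivalent to a fibre), which is why you are driven to Thom's isotopy lemma.

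That route has two unresolved problems. First, the claim that fibrewise ampleness forces ``the incidence loci controlling base-point freeness to have constant fibre dimension'' is unjustified, and in any case constant dimension is neither necessary nor sufficient for Thom's lemma: what one needs is a Whitney stratification of the compactified family whose strata all submerse onto $B$, and there is no reason a priori that such a stratification exists globally rather than only after stratifying $B$. Second, and relatedly, the fallback ``when $B$ is singular one stratifies $B$ as well and argues stratum by stratum'' only yields local triviality \emph{within} each stratum of $B$; near a point of a lower-dimensional stratum every neighbourhood in $B^\topp$ meets several strata, so this does not supply the locally trivial cover your \v Cech nerve needs. In short, your outline identifies the correct obstacle but the proposed resolution is both harder than necessary and, as written, incomplete; the paper's use of \cite[Lemma~4.7]{Aumonier2024} sidesteps the entire issue.
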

\begin{proof}
We proceed as in \cite[Section~2 and Section~4.3]{Aumonier2024}. As the family $X \to B$ has a section, the relative Picard variety $\Pic^\alpha(X/B)$ exists and represents the relative Picard functor (see e.g. \cite[Exercise~4.3]{Kleiman2005}). In particular, we may choose a Poincaré line bundle $\cP$ on $\Pic^\alpha(X/B) \times_B X$. The argument of \cite[Section~2]{Aumonier2024} then shows that $\Alg_B^\alpha(X, \bP^m)$ is a scheme which may be constructed as an open subscheme
\[
    \Alg_B^\alpha(X, \bP^m) \subset \underline{\Proj}_{\Pic^\alpha(X/B)}\big(\Sym \ (q_*\cP^{\oplus m+1})^\vee\big)
\]
where $q \colon \Pic^\alpha(X/B) \times_B X \to \Pic^\alpha(X/B)$ is the first projection. Thus the projection $\Alg_B^\alpha(X, \bP^m) \to B$ is the restriction of a fibre bundle to an open subset of the total space. On the other hand, it is straightforward to check that $\Map_B^\alpha(X, \bP^m) \to B$ is a fibre bundle. The lemma finally follows from \cite[Lemma~4.7]{Aumonier2024}, whose assumptions are verified above and using \cite[Theorem~A]{Aumonier2024}.
\end{proof}

The following result is the main reduction lemma.
\begin{lemma}\label{lemma:comparison-with-schemes}
Let $X \to B$ be an algebraic $\fX$-family between schemes locally of finite type. Then the natural morphism
\[
    \Alg_B^\alpha(X, \bP^m)^\topp \to \Map_B^\alpha(X, \bP^m)
\]
induces an isomorphism on singular homology in the range of degrees $* < (2m - 2n +1) \cdot d(\fX,\alpha) + 2m - 2n - 3$.
\end{lemma}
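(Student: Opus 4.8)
The goal is to upgrade the previous lemma, which required the family $X \to B$ to admit a section, to an arbitrary algebraic $\fX$-family between schemes locally of finite type. The plan is to reduce to the sectioned case by a local-to-global argument: since $X \to B$ is smooth and projective, it admits sections Zariski-locally on $B$, so we can cover $B$ by open sets over which the previous lemma applies, and then glue using a Mayer--Vietoris / semi-simplicial descent argument. Concretely, I would pick a Zariski open cover $\{U_i\}_{i \in I}$ of $B$ such that each $X|_{U_i} \to U_i$ admits a section (possible because a smooth morphism admits sections étale-locally, and one can further arrange Zariski-local sections, or at worst pass to an étale cover and use that the relevant constructions are étale-local on the base). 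Over each $U_i$, and over each finite intersection $U_{i_0} \cap \dots \cap U_{i_p}$, the family still admits a section, so \cref{lemma:comparison-with-section} applies and gives a homology isomorphism in the stated range of degrees.

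**Key steps.** First I would observe that both $\Alg_B^\alpha(X,\bP^m) \to B$ and $\Map_B^\alpha(X,\bP^m) \to B$ are, respectively, (the restriction to an open of) a relative construction and a fibre bundle over $B$, and in particular their formation commutes with base change along the open immersions $U \hookrightarrow B$; so $\Alg_{U_i}^\alpha(X|_{U_i},\bP^m) = \Alg_B^\alpha(X,\bP^m) \times_B U_i$ and similarly on intersections and on the $\Map$ side. Second, passing to underlying topological stacks (which commutes with these open base changes, since analytification and the topologification of \cref{def:topologification} are compatible with open immersions), I get, for each $p \geq 0$, a morphism of spaces
\[
    \coprod_{i_0, \dots, i_p} \Alg_{U_{i_0 \cdots i_p}}^\alpha(X, \bP^m)^\topp \lra \coprod_{i_0, \dots, i_p} \Map_{U_{i_0 \cdots i_p}}^\alpha(X, \bP^m)
\]
which by \cref{lemma:comparison-with-section} is a homology isomorphism in degrees $* < (2m - 2n + 1)\cdot d(\fX,\alpha) + 2m - 2n - 3$ (using that $d(\fX|_{U}, \alpha|_U) \geq d(\fX,\alpha)$ for any open $U$, since the infimum in \cref{def:uniform-d-alpha} is over a smaller set of points). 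These assemble into a map of Čech/semi-simplicial spaces whose realisations compute the homotopy types of $\Alg_B^\alpha(X,\bP^m)^\topp$ and $\Map_B^\alpha(X,\bP^m)$ respectively — for the $\Map$ side this is the nerve of an open cover of a fibre bundle, for the $\Alg$ side it is the nerve of the pullback of that same open cover to the total space of the relevant (open subscheme of a) projective bundle. Third, I would invoke \cref{lemma:semisimplicial-comparison} to conclude that the induced map on realisations is a homology isomorphism in the same range, which is exactly the claim.

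**Main obstacle.** The delicate point is making precise that the topological stacks in question really are recovered as realisations of these semi-simplicial spaces, i.e. that the topologification functor of \cref{def:topologification} is compatible with open covers of the base in the way needed. One must check that $(-)^\topp$ sends the Čech nerve of a Zariski cover $\coprod U_i \to B$ (in the algebraic world) to something computing $B^\topp$, and more importantly that forming $\Alg^\alpha$ and its topologification both commute with the open base changes $U_i \hookrightarrow B$; the $\Map$-side is easy since it is genuinely a fibre bundle over $B^\topp$ and open covers of the base pull back to open covers of the total space, but on the $\Alg$-side one needs that the open-subscheme-of-a-$\underline{\Proj}$ description from the proof of \cref{lemma:comparison-with-section} is local on $B$, which follows because the relative Picard functor, the Poincaré bundle, the pushforward $q_*\cP^{\oplus m+1}$ and the open locus of honest maps are all formed compatibly with base change — though one should be slightly careful that the Poincaré bundles over different $U_i$ need not agree on overlaps, which only affects the construction up to a line bundle twist and does not change the $\underline{\Proj}$ or the open locus. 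A secondary bookkeeping point is that $I$ may be infinite, so one should either take a locally finite cover (possible since $B$ is locally of finite type, hence its analytification is locally compact and metrisable, so paracompact) or simply note that homology commutes with the relevant filtered colimits; either way \cref{lemma:semisimplicial-comparison} applies degreewise since its hypothesis is stable under coproducts.
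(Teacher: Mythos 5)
Your overall strategy---cover the base so that the family admits sections, apply \cref{lemma:comparison-with-section} on each piece, and conclude via \cref{lemma:semisimplicial-comparison}---is exactly the paper's. However, your first proposed route (a Zariski open cover of $B$ over which $X \to B$ has sections) does not work: a smooth projective morphism admits sections \emph{étale-locally}, not Zariski-locally, on the base. For instance, a nontrivial Brauer--Severi scheme over a smooth base is a smooth projective family with no section over any dense Zariski open, since a section would trivialise the Brauer class at the generic point, and the Brauer group of a smooth variety injects into that of its function field. Your parenthetical fallback, ``pass to an étale cover'', is the correct move and is what the paper does: it chooses a single étale surjection $U \to B$ with $X_U \to U$ admitting a section (by EGA IV, 17.16.3(ii)), forms the fibre powers $U_p = U\times_B\cdots\times_B U$ (each of which inherits a section of $X_{U_p} \to U_p$), identifies the Čech nerve terms as $\Alg^\alpha_{U_p}(X_{U_p},\bP^m)$, applies \cref{lemma:comparison-with-section} levelwise, and invokes \cref{lemma:semisimplicial-comparison}.

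Working with a single étale surjection rather than an $I$-indexed open cover also dissolves the bookkeeping worries you raise (infinite index sets, paracompactness, coproducts over multi-indices): each $U_p$ is a single scheme locally of finite type, so \cref{def:homotopy-type} applies directly. Your concern about Poincaré bundles disagreeing on overlaps is likewise moot in the paper's set-up, because it does not glue Poincaré bundles across pieces; it applies \cref{lemma:comparison-with-section} independently on each $U_p$, each with its own section and hence its own Poincaré bundle.
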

\begin{proof}
The morphism $X \to B$ is smooth, so there exists an étale surjective morphism $U \to B$ such that $X_U = U \times_B X \to U$ admits a section \cite[17.16.3.(ii)]{EGAIV}. More generally, for any integer $p \geq 1$ we write $U_p = U \times_B \cdots \times_B U$ for the $p$-fold product, and observe that $X_{U_p} \to U_p$ admits a section as well. As observed in the proof of \cref{lemma:comparison-with-section} above, the stack $\Alg_U^\alpha(X_U, \bP^m)$ is in fact a scheme and the pullback square
\[
\begin{tikzcd}
{\Alg_U^\alpha(X_U, \bP^m)} \arrow[r] \arrow[d] & {\Alg_B^\alpha(X, \bP^m)} \arrow[d] \\
U \arrow[r]  & B
\end{tikzcd}
\]
shows that it is an étale presentation of $\Alg_B^\alpha(X, \bP^m)$. Hence the homotopy type of this latter stack can be computed as the geometric realisation of the semi-simplicial space
\[
    [p] \longmapsto \Alg_U^\alpha(X_U, \bP^m) \times_{\Alg_B^\alpha(X, \bP^m)} \cdots \times_{\Alg_B^\alpha(X, \bP^m)} \Alg_U^\alpha(X_U, \bP^m) \qquad \text{($p$ times)}.
\]
Now, simple manipulations of pullbacks identify this fibre product as
\[
    \Alg_U^\alpha(X_U, \bP^m)^{\times_{\Alg_B^\alpha(X, \bP^m)} p} \cong \Alg_{U_p}^\alpha(X_{U_p}, \bP^m).
\]
On the other hand, the homotopy type of $\Map_B^\alpha(X, \bP^m)$ can be computed by a similar semi-simplicial space where algebraic maps are replaced by continuous ones. We thus obtain a morphism of semi-simplicial spaces
\[
    \Alg_{U_\bullet}^\alpha(X_{U_\bullet}, \bP^m) \lra \Map_{U_\bullet}^\alpha(X_{U_\bullet}, \bP^m)
\]
which is a levelwise homology isomorphism in a range by \cref{lemma:comparison-with-section}. The proof finishes by invoking \cref{lemma:semisimplicial-comparison}.
\end{proof}

As usual when working with stacks, we might run into situations involving algebraic spaces. We thus state the following further reduction for convenience.
\begin{lemma}\label{lemma:comparison-with-spaces}
Let $X \to B$ be an algebraic $\fX$-family between algebraic spaces locally of finite type. Then the natural morphism
\[
    \Alg_B^\alpha(X, \bP^m)^\topp \to \Map_B^\alpha(X, \bP^m)
\]
induces an isomorphism on singular homology in the range of degrees $* < (2m - 2n +1) \cdot d(\fX,\alpha) + 2m - 2n - 3$.
\end{lemma}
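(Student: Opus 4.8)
The plan is to run the proof of \cref{lemma:comparison-with-schemes} essentially verbatim, the only new point being that the atlas of $B$ we work with must be taken to be a scheme, and that its fibre products over $B$ must be seen to remain schemes.

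First I would choose a good atlas. Pulling back a scheme atlas of the algebraic space $B$ and refining it by \cite[17.16.3.(ii)]{EGAIV} exactly as in the proof of \cref{lemma:comparison-with-schemes}, we obtain an étale surjective morphism $V \to B$ with $V$ a scheme locally of finite type such that $X_V = V \times_B X \to V$ admits a section. For $p \geq 1$ set $V_p = V \times_B \cdots \times_B V$ ($p$-fold product) and $X_{V_p} = V_p \times_B X$. Because the diagonal $B \to B \times B$ of the algebraic space $B$ is representable, each $V_p$ is again a scheme locally of finite type; moreover the first projection $X_{V_p} = V_p \times_V X_V \to V_p$ inherits a section. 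Hence \cref{lemma:comparison-with-section} applies to each family $X_{V_p} \to V_p$, and in particular (by the proof of that lemma) each $\Alg_{V_p}^\alpha(X_{V_p},\bP^m)$ is a scheme locally of finite type.

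From here the argument is identical to that of \cref{lemma:comparison-with-schemes}. The pullback square
\[
\begin{tikzcd}
{\Alg_V^\alpha(X_V,\bP^m)} \arrow[r] \arrow[d] & {\Alg_B^\alpha(X,\bP^m)} \arrow[d] \\
V \arrow[r] & B
\end{tikzcd}
\]
exhibits the scheme $\Alg_V^\alpha(X_V,\bP^m)$ as an étale presentation of $\Alg_B^\alpha(X,\bP^m)$, so that $\Alg_B^\alpha(X,\bP^m)^\topp$ is the geometric realisation of the semi-simplicial space $[p] \mapsto \Alg_{V_p}^\alpha(X_{V_p},\bP^m)^\topp$ (using that forming underlying topological stacks commutes with the relevant fibre products, together with the identification $\Alg_V^\alpha(X_V,\bP^m)^{\times_{\Alg_B^\alpha(X,\bP^m)} p} \cong \Alg_{V_p}^\alpha(X_{V_p},\bP^m)$), and similarly $\Map_B^\alpha(X,\bP^m)$ is the realisation of $[p] \mapsto \Map_{V_p}^\alpha(X_{V_p},\bP^m)$. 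The resulting morphism of semi-simplicial spaces is a levelwise homology isomorphism in degrees $* < (2m - 2n + 1)\cdot d(\fX,\alpha) + 2m - 2n - 3$ by \cref{lemma:comparison-with-section}, and \cref{lemma:semisimplicial-comparison} then concludes. I expect the only genuine subtlety to be the verification that the $V_p$ stay in the category of schemes — which is exactly where representability of the diagonal of $B$ is used, and which is what makes it legitimate to feed them into \cref{lemma:comparison-with-section} — everything else being a transcription of the proof of \cref{lemma:comparison-with-schemes}.
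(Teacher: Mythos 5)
Your proposal is correct and matches the paper's argument, which is stated very tersely as ``pick an étale presentation $U \to B$ of the algebraic space $B$ and proceed as in the proof of \cref{lemma:comparison-with-schemes}.'' You have simply spelled out the (correct) implicit point that the fibre powers $V_p$ remain schemes because the diagonal of an algebraic space is representable by schemes, so that \cref{lemma:comparison-with-section} indeed applies levelwise.
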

\begin{proof}
Pick an étale presentation $U \to B$ of the algebraic space $B$ and proceed as in the proof of \cref{lemma:comparison-with-schemes}.
\end{proof}

We are now ready to prove our main theorem.
\begin{proof}[\proofname\space of \cref{theorem:precise-main}]
Let $U \to \fB$ be an étale presentation and write $U_p$ for the $p$-fold product $U \times_\fB \cdots \times_\fB U$ (which is a priori only an algebraic space). The homotopy type of the stack $\Alg_\fB^\alpha(\fX, \bP^m)$ can be computed as the geometric realisation of the semi-simplicial space $\Alg_{U_\bullet}^\alpha(\fX_{U_\bullet}, \bP^m)$ and likewise for its continuous analogue $\Map_\fB^\alpha(\fX, \bP^m)$. By \cref{lemma:comparison-with-spaces}, the natural map
\[
    \Alg_{U_\bullet}^\alpha(\fX_{U_\bullet}, \bP^m) \lra \Map_{U_\bullet}^\alpha(\fX_{U_\bullet}, \bP^m)
\]
is a levelwise homology isomorphism in a range. The theorem then follows by applying \cref{lemma:semisimplicial-comparison}.
\end{proof}

\section{Consequences for families of maps from curves}\label{sec:teichmuller}

Let $g \geq 2$, and denote by $\pi \colon \fC_g \to \fM_g$ the universal family of smooth genus $g$ curves. Choose once and for all a smooth genus $g$ closed real surface $\Sigma_g$, and let $\Diff^+(\Sigma_g)$ be the topological group of orientation preserving diffeomorphism of $\Sigma_g$. Let $\cJ_g$ be the space of complex structures on $\Sigma_g$ (see \cite[Section~1.1]{Ayala2008} for a definition). The group $\Diff^+(\Sigma_g)$ acts on $\cJ_g$ and it is well-known that the homotopy type of $\fM_g$ is that of the homotopy quotient $\cJ_g \sslash \Diff^+(\Sigma_g)$. We will analogously describe the homotopy types of the stacks of algebraic and continuous maps. Let
\[
    \cJ_g^d(\bP^m) = \set{(J,f)}{J \in \cJ_g, \ f \in \Hol^d((\Sigma_g,J), \bP^m)} \subset \cJ_g \times \Map^d(\Sigma_g, \bP^m).
\]
It is a standard consequence of Chow's theorem that the spaces of algebraic and holomorphic maps agree in the present case. Accordingly, we can identify the homotopy type of the stack of algebraic maps
\[
    |\Alg_{\fM_g}^d(\fC_g, \bP^m)| \simeq \cJ_g^d(\bP^m) \sslash \Diff^+(\Sigma_g).
\]
On the other hand, using contractibility of $\cJ_g \simeq *$, we may also compute
\[
    |\Map_{\fM_g}^d(\fC_g,\bP^m)| \simeq (\cJ_g \times \Map^d(\Sigma_g, \bP^m)) \sslash \Diff^+(\Sigma_g) \simeq \Map^d(\Sigma_g, \bP^m) \sslash \Diff^+(\Sigma_g).
\]
We give a more direct proof of our main theorem specialised in this case:
\begin{theorem}\label{theorem:ayala}
Let $d \geq 2g+1$ and $m \geq 1$. Then the map
\[
    \cJ_g^d(\bP^m) \sslash \Diff^+(\Sigma_g) \lra (\cJ_g \times \Map^d(\Sigma_g, \bP^m)) \sslash \Diff^+(\Sigma_g)
\]
induces an isomorphism in homology in the range $* < (2m-1)(d-2g) + 2m - 5$.
\end{theorem}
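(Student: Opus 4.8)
The plan is to replay the proof of \cref{theorem:precise-main}, with the contractible space $\cJ_g$ playing the role of a simplicial étale presentation of $\fM_g$ and with \cref{lemma:comparison-with-section} (over a point) providing the fibrewise input. Write $G=\Diff^+(\Sigma_g)$. Both sides of the asserted map are homotopy quotients by $G$, hence fibred over $BG$ with fibres $\cJ_g^d(\bP^m)$ and $\cJ_g\times\Map^d(\Sigma_g,\bP^m)$ respectively; since forming homotopy quotients sends a $G$-equivariant homology isomorphism in degrees $*<N$ to a homology isomorphism in degrees $*<N$ (compare the two Serre spectral sequences over $BG$ with local coefficients, as in \cref{lemma:semisimplicial-comparison} and \cite[Lemma~4.7]{Aumonier2024}), it suffices to prove that the inclusion
\[
    \cJ_g^d(\bP^m)\lra\cJ_g\times\Map^d(\Sigma_g,\bP^m)
\]
is a homology isomorphism in degrees $*<(2m-1)(d-2g)+2m-5$.

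This map lies over $\cJ_g$. On the target, the projection to $\cJ_g$ is a trivial bundle with fibre $\Map^d(\Sigma_g,\bP^m)$. For the source, I would globalise the construction of \cite[Section~2]{Aumonier2024} over $\cJ_g$: the smooth trivial family $\Sigma_g\times\cJ_g\to\cJ_g$ carrying the tautological relative complex structure admits a constant smooth section, so its relative Picard $\cP^d\to\cJ_g$ of fibrewise-degree-$d$ line bundles exists as a smooth bundle, together with a Poincaré line bundle $\cP$ on $\cP^d\times_{\cJ_g}(\Sigma_g\times\cJ_g)$, fibrewise holomorphic. Since $d\geq 2g+1>2g-1$, the relative $H^1$ of a fibrewise-degree-$d$ line bundle on a family of genus $g$ curves vanishes, so cohomology and base change makes the fibrewise $H^0$ of $\cP$ a rank $d+1-g$ vector bundle $E$ on $\cP^d$, and $\cJ_g^d(\bP^m)$ is identified with the complement in $\bP_{\cP^d}(E^{\oplus m+1})$ of the (closed) base-point locus of tuples of sections with a common zero, exactly as in \cite[Section~2]{Aumonier2024}. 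Over $J\in\cJ_g$ this recovers the usual description of $\Hol^d((\Sigma_g,J),\bP^m)$ by base-point-free linear systems on $(\Sigma_g,J)$, and the displayed inclusion restricts over $J$ to the inclusion $\Hol^d((\Sigma_g,J),\bP^m)\hookrightarrow\Map^d(\Sigma_g,\bP^m)$.

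We have thereby exhibited the displayed map, over the base $\cJ_g$, as a map from the restriction of a fibre bundle to an open subset of its total space to a fibre bundle. By \cref{lemma:comparison-with-section} applied to the curve $(\Sigma_g,J)$ viewed as an $\fC_g$-family over $\Spec(\bC)$ — with Chow's theorem identifying $\Alg^d((\Sigma_g,J),\bP^m)^\topp$ with $\Hol^d((\Sigma_g,J),\bP^m)$ — this restriction is a homology isomorphism in the range asserted in the theorem, uniformly in $J$ (this is \cref{lemma:comparison-with-section}'s range for $n=1$, using that $d(\fC_g,d)=d-2g$ for $d\geq 2g+1$). Feeding this uniform fibrewise estimate into \cite[Lemma~4.7]{Aumonier2024}, precisely as in the proof of \cref{lemma:comparison-with-section} itself, shows the displayed map is a homology isomorphism in degrees $*<(2m-1)(d-2g)+2m-5$, which together with the first paragraph proves the theorem.

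The step that needs real care — and the only genuine obstacle — is the middle paragraph: carrying out the relative Picard and relative linear-system construction over the non-algebraic, infinite-dimensional base $\cJ_g$, and verifying that cohomology and base change and the closedness of the base-point locus persist there. These are fibrewise statements, local on the base, hence valid for any smooth proper family of compact Riemann surfaces over a paracompact base, with $d\geq 2g+1$ supplying the vanishing $H^1=0$ that makes the relevant direct images vector bundles. (Alternatively one may descend to finite-dimensional Teichmüller space $\cJ_g/\Diff_0(\Sigma_g)$, which carries a holomorphic universal curve, and pull back; staying over $\cJ_g$ has the advantage of keeping the $G$-action of the first paragraph in view.)
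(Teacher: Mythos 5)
Your proof follows the paper's route: reduce via the Serre spectral sequence over $B\Diff^+(\Sigma_g)$ to showing the $\Diff^+(\Sigma_g)$-equivariant map $\cJ_g^d(\bP^m)\to\cJ_g\times\Map^d(\Sigma_g,\bP^m)$ is a homology isomorphism in the stated range, then realise the source over $\cJ_g$ as an open subset of a projective bundle over a relative Picard and invoke \cite[Lemma~4.7]{Aumonier2024} with the fibrewise input supplied by the single-variety theorem. The only difference is technical and you flag it yourself: you carry out the relative Picard and linear-system construction \emph{globally} over the infinite-dimensional base $\cJ_g$, whereas the paper does it only \emph{locally} on $\cJ_g$ (where the universal curve $\cC_g\to\cJ_g$ admits a section) and then assembles the local pieces via the semi-simplicial \v{C}ech argument of \cref{lemma:comparison-with-schemes}, thereby sidestepping the question of whether the global construction over a non-algebraic base exists.
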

\begin{proof}
By comparing the Serre spectral sequences of the homotopy orbits constructions, it suffices to show that the map
\[
    \cJ_g^d(\bP^m) \lra \cJ_g \times \Map^d(\Sigma_g, \bP^m)
\]
induces an isomorphism in homology in the claimed range. This is also the strategy employed in \cite{Ayala2008}. We claim that the projection map $\cJ_g^d(\bP^m) \to \cJ_g$ is locally on the base a microfibration obtained as the restriction of a fibre bundle to an open subset. Indeed, there is a universal family of curves $\cC_g \to \cJ_g$ which locally admits a section. Hence, the argument in the proof of \cref{lemma:comparison-with-section} explains how to construct, locally, the space of algebraic maps as an open subset of a projective bundle. We may then proceed as in \cref{lemma:comparison-with-schemes} to finish the proof.
\end{proof}

\begin{remark}
The observant reader might notice that our range of homology isomorphism differs from that stated in \cite{Ayala2008}. This is only an artifact of Segal's range from \cite{Segal1979} which is not always optimal, and we have chosen to use that of \cite{Aumonier2024} (though not believed to be optimal either).
\end{remark}

\begin{remark}
For consequences in homology of \cref{theorem:ayala}, see the introduction of \cite{Ayala2008}, which remains valid as it only relies on the final statement of the main theorem.
\end{remark}

\printbibliography

\end{document}